\def\BibTeX{{\rm B\kern-.05em{\sc i\kern-.025em b}\kern-.08em
    T\kern-.1667em\lower.7ex\hbox{E}\kern-.125emX}}
\newtheorem{theorem}{Theorem}
\newtheorem{proposition}[theorem]{Proposition}
\newtheorem{lemma}[theorem]{Lemma}
\newtheorem{corollary}[theorem]{Corollary}
\newtheorem{definition}[theorem]{Definition}
\newtheorem{remark}[theorem]{Remark}
\newtheorem{example}[theorem]{Example}
\begin{document}

\title{On the affine permutation group of certain decreasing Cartesian codes
\thanks{Hiram H. L\'opez was partially supported by the NSF grants DMS-2201094 and DMS-2401558.}
}

\author{\IEEEauthorblockN{Eduardo Camps-Moreno}
\IEEEauthorblockA{\textit{Virginia Tech} \\
%\textit{name of organization (of Aff.)}\\
Blacksburg, VA, USA \\
e.camps@vt.edu}
\and
\IEEEauthorblockN{Hiram H. L\'opez}
\IEEEauthorblockA{\textit{Virginia Tech} \\
%\textit{name of organization (of Aff.)}\\
Blacksburg, VA, USA \\
hhlopez@vt.edu}
\and
\IEEEauthorblockN{Eliseo Sarmiento}
\IEEEauthorblockA{\textit{Instituto Polit\'ecnico Nacional} \\
%\textit{name of organization (of Aff.)}\\
Mexico City, Mexico \\
esarmiento@ipn.mx}
\and
\IEEEauthorblockN{Ivan Soprunov}
\IEEEauthorblockA{\textit{Cleveland State University} \\
%\textit{name of organization (of Aff.)}\\
Cleveland, OH, USA \\
i.soprunov@csuohio.edu}}

\maketitle

\begin{abstract}
A decreasing Cartesian code is defined by evaluating a monomial set closed under divisibility on a Cartesian set. Some well-known examples are the Reed-Solomon, Reed-Muller, and (some) toric codes. The affine permutations consist of the permutations of the code that depend on an affine transformation. In this work, we study the affine permutations of some decreasing Cartesian codes, including the case when the Cartesian set has copies of multiplicative or additive subgroups.
\end{abstract}

\begin{IEEEkeywords}
permutation group, affine transformation, evaluation code, decreasing code, Cartesian code.
\end{IEEEkeywords}

\section{Introduction}
Let $\mathbb{F}_q$ be a finite field with $q$ elements and $C \subset \mathbb{F}_{q}^n$ a linear code. As we focus only on linear codes, we omit the word linear from now on. The permutation group of the code $C$ consists of all the permutations $\pi$ of the symmetric group $S_n$ such that $\pi(C) = C$, where $\pi$ acts on $c=(c_1,\ldots,c_n) \in C$ in the natural way as $\pi(c_1,\ldots,c_n) = (c_{\pi(1)},\ldots,c_{\pi(n)})$.

Recently, the permutation groups of codes have attracted a lot of attention due to their implementation in the automorphism ensemble decoding (AED)~\cite{aed-rm, aed-pc, aed-ldpc} and the analysis of capacity-achieving codes for erasure channels~\cite{bec-rm, beyond}.
The AED uses several decoders in parallel, along with some code permutations. However, not every permutation can be used since there are permutations that commute with the decoder; for instance, the lower triangular affine permutations with the successive cancellation decoder for binary polar codes \cite{aed-rm, scinvariantpolar}.

%For a given received word $y$, the AED uses $m$ decoders in parallel to obtain $x_i$ from $\pi_i(y)$, where the $\pi_i$'s are some permutations of the code. As a final output, the AED gives $x=\underset{i}{\mathrm{argmin}}\|y-\pi_i^{-1}(x_i)\|$, the decodification of $y$, where $\|\cdot\|$ is a certain metric \cite{aed-rm}.

%It is important to note that the AED does not work with any subgroup of the permutation group since, for some decoders, certain permutations are invariant, i.e., if $dec$ is the decoding function, $dec(\pi(y))=dec(y)$. This is the case, for instance, for the successive cancellation decoders and the lower triangular affine permutation of binary polar codes \cite{aed-rm, scinvariantpolar}. 

The affine permutation group consists of permutations that depend on an invertible matrix and a vector; see Definition \ref{D:affine-perm}. The affine permutation groups have been studied for their implementation in AED to decode binary polar codes \cite{aed-pc} due to their characterization as monomial codes \cite{appc}. The affine permutation group of polar codes has been completely determined in~\cite{all}.

A monomial code is defined by evaluating certain monomials on a set of points (evaluation points).
Some well-known examples include the Reed-Solomon and the Reed-Muller codes.
When the set of evaluation points $\mathbb{F}_q^m$ is replaced by a Cartesian set in a Reed-Muller code, the evaluation code is called an affine Cartesian code~\cite{CartesianCode}. A monomial Cartesian code is generated by evaluating a fixed set of monomials on a Cartesian set~\cite{MonomialCartesian}. If the set of monomials is closed under divisibility, we call it a decreasing Cartesian code.

%While the definition of decreasing codes requires working with equivalence classes of monomials modulo a vanishing ideal, the problem of finding the automorphism group \ivan{why automorphism group and not affine permutation group?} of a decreasing code uses similar strategies to characterize generic initial ideals. \ivan{Maybe the following is better?}

%Since the definition of decreasing codes requires working with equivalence classes of monomials modulo a vanishing ideal, the problem of finding the affine permutation group of a decreasing code is related to the problem of characterizing generic initial ideals.

Finding the affine permutation group of a decreasing code is equivalent to finding a subgroup of matrices that fixes a monomial set. Thus, it is related to the problem of characterizing generic initial ideals.
Such ideals are invariant under the action of the Borel group of upper triangular non-singular matrices \cite{galligo}. 
These are characterized as Borel ideals \cite{bayer} in characteristic zero and as $p$-Borel ideals \cite{pardue} in positive characteristic. A generalization of this concept is $Q$-Borel ideals \cite{francisco}. %All these ideals can be understood as monomial ideals fixed under the action of certain matrix subgroups.

In this work, we explore the affine permutation group of decreasing Cartesian codes, including the case when the Cartesian set has copies of multiplicative or additive subgroups. This family matters because we can associate a monomial structure to some nonbinary kernels~\cite{vardohus, dec}. However, even for the classical Arikan kernel, the affine permutation group of polar codes depends on the characteristic of the field.

\section{Preliminaries}
%We start by defining monomial codes. We then introduce the concept of permutations and affine permutations.
\subsection{Monomial Cartesian codes}
Let $\mathcal{A}=\prod_{i=1}^m A_i$ be a Cartesian set with $A_i\subseteq\mathbb{F}_q$ and $n_i:=|A_i|\geq 2$. Let $R=\mathbb{F}_q[x_1,\ldots,x_m]$ be the polynomial ring in $m$ variables and denote by $\mathcal{M}$ the monomials of $R$. For any $f\in R$, we define $f(\mathcal{A}) = (f(P_1),\ldots,f(P_n))$, where $\mathcal{A}=\{P_1,\ldots,P_n\}$ with $n=n_1\cdots n_m$.

\begin{definition}\rm
    Fix a set of monomials $ L \subseteq \Delta:=\{u\in\mathcal{M}\ :\ \deg_{x_i} u<n_i\}$. The \textit{monomial Cartesian code}, which depends on the evaluation of the monomials $L$ on the Cartesian set $\mathcal{A}$, is denoted and defined by 
    $$L(\mathcal{A}) = \mathrm{Span}_{\mathbb{F}_q}\left\{f(\mathcal{A})\ :\ f\in L\right\}.$$

The set $L$ is {\it closed under divisibility} if $f$ in $L$ and $g$ a divisor of $f$ implies that $g$ is also in $L$. In this case, we say that the code $L(\mathcal{A})$ is a \textit{decreasing monomial Cartesian code}.
\end{definition}

%The vanishing ideal of $\mathcal{A}$, denoted by $I_\mathcal{A}$, is the set of all polynomials in $R$ that vanish at every point of $\mathcal{A}$. The vanishing ideal plays an important role in defining evaluation codes, since if two polynomials $f$ and $g$ satisfy $f-g \in I_\mathcal{A}$ then \ivan{why not say if and only if?} $f(\mathcal{A})=g(\mathcal{A})$. So, the evaluation code depends only on polynomials modulo the vanishing ideal $I_\mathcal{A}$. In other words, as $I_\mathcal{A}=\left(\prod_{\alpha\in A_j} (x_j-\alpha)\right)_{j=1}^m$ by \cite[Lemma 2.3]{CartesianCode}, for any polynomial $g\in R$, there exists a polynomial $f \in R$ such that $f\in\mathrm{Span}_{\mathbb{F}_q}\Delta$ and $g-f\in I_\mathcal{A}$, meaning $f(\mathcal{A})=g(\mathcal{A})$. We then write $\overline{g}=f$ and obtain that $\mathbb{F}_q^n\cong R/I_\mathcal{A}$ \ivan{this is unclear}. We write $\overline{L}$ to denote the set of all $g\in R$ such that $\overline{g}\in L$. \ivan{How about this:}
The vanishing ideal of $\mathcal{A}$, denoted by $I_\mathcal{A}$, is the set of all polynomials in $R$ that vanish at every point of $\mathcal{A}$. The vanishing ideal plays an important role in defining evaluation codes since for any two polynomials $f$ and $g$ in $R$, we have that $f(\mathcal{A})=g(\mathcal{A})$ if and only if $f-g \in I_\mathcal{A}$. In other words, the evaluation map $f\mapsto f(\mathcal{A})$ induces a linear isomorphism $R/I_\mathcal{A}\cong \mathbb{F}_q^n$. This shows that
the evaluation code depends only on polynomials modulo the vanishing ideal $I_\mathcal{A}$. For a Cartesian set $\mathcal{A}$,
we have $I_\mathcal{A}=\left(\prod_{\alpha\in A_j} (x_j-\alpha)\right)_{j=1}^m$ by \cite[Lemma 2.3]{CartesianCode}. Thus, for any polynomial $g\in R$, there exists a polynomial $f \in\mathrm{Span}_{\mathbb{F}_q}(\Delta)$ with $g-f\in I_\mathcal{A}$ and so $f(\mathcal{A})=g(\mathcal{A})$. We denote such $f$ by $\overline{g}$. Furthermore, for a monomial set $L\subset\Delta$, we use $\overline{L}$ to denote the set of all $g\in R$ such that $\overline{g}\in L$.

\subsection{Affine permutations}
Let $C$ be a code in $\mathbb{F}_q^n$. The \textit{permutation group} of $C$ is denoted and defined by
$$\mathrm{Perm}(C)= \{ \pi\in S_n : \pi(C) = C \},$$
where $\pi$ acts on $c=(c_1,\ldots,c_n) \in C$ as 
%$\pi(c_1,\ldots,c_n) = (c_{\pi(1)},\ldots,c_{\pi(n)})$.
$c_\pi= (c_{\pi(1)},\ldots,c_{\pi(n)})$. We also denote
$\pi(\mathcal{A})=(P_{\pi(1)},\ldots,P_{\pi(n)})$
for a permutation $\pi\in S_n$.

\begin{remark}\rm
Note that for any element $c$ of the evaluation code $L(\mathcal{A})$, there is a polynomial $f\in\mathrm{Span}_{\mathbb{F}_q}(L)$ such that $f(\mathcal{A})=c$. Thus, if $\pi \in \mathrm{Perm}(L(\mathcal{A}))$, then there exists a polynomial $f_\pi$ such that
$$f_\pi(\mathcal{A}) = c_\pi=f(\pi(\mathcal{A})).$$
Then, we can understand $\pi$ as a function on $R/I_\mathcal{A}$, $f\mapsto f_\pi$.
\end{remark}
We are interested in those $\pi$ that can be understood as affine transformations in the following setting.

Let $A$ be an $m\times m$ matrix with entries in $\mathbb{F}_q$ and $b\in\mathbb{F}_q^m$. As usual, the affine transformation $T(x)=Ax+b$ acts on $\mathbb{F}_q^m$ by $T(P)=AP+b,$ where $P=(p_1,\ldots,p_m)^t \in \mathbb{F}_q^m$. But $T(x)$ also acts on $R$ by $$T(f)=f(y_1,\ldots,y_m),$$
where $(y_1,\ldots,y_m)^t=A(x_1,\ldots,x_m)^t+b$. Consequently, $T(x)$ acts on the set of evaluation vectors by
%\[\begin{aligned}
%T(f(\mathcal{A}))=T(f)(\mathcal{A})=&(T(f)(P_1),\ldots, T(f)(P_n))\\
%f(T(\mathcal{A}))=&(f(T(P_1)),\ldots, f(T(P_n))).
%\end{aligned}\]
\[\begin{aligned}
T(f(\mathcal{A}))&=T(f)(\mathcal{A})\\
&=f(T(\mathcal{A})).
\end{aligned}\]
The last two equations lead to the following definition.
\begin{definition}\label{D:affine-perm}\rm
 Let $A$ be an $m\times m$ matrix with entries in $\mathbb{F}_q$ and $b\in\mathbb{F}_q^m$.
 We say that $T$ is an \textit{affine permutation} of $L(\mathcal{A})$ if $T$ leaves invariant $L$ and $\mathcal{A}$; i.e. the following two conditions hold:
\begin{enumerate}
\item[\rm (1)] $T(\mathcal{A})=\mathcal{A}$ and
\item[\rm (2)] $\overline{T(L)} \subseteq \mathrm{Span}_{\mathbb{F}_q}(L)$.
\end{enumerate}
Condition (2) means that for any $f\in L$, $T(f)$ may not be an element of $\mathrm{Span}_{\mathbb{F}_q}(L)$, but $T(f(\mathcal{A}))$ is an element of $L(\mathcal{A})$.
The set of affine permutations of $L(\mathcal{A})$ is denoted by $\mathrm{Perm}_A(L(\mathcal{A}))$.
\end{definition}

The following example shows that condition (1) $T(\mathcal{A})=\mathcal{A}$ is necessary, otherwise, $T$ may not define a permutation.% of the code even if $T(L)\subseteq \mathrm{Span}_{\mathbb{F}_q}(L)$ and $T$ is injective.

\begin{example}\rm
    Take $L=\{x_2,x_1,1\}$, $\mathcal{A}=\mathbb{F}_3^\ast\times\{0,1\}=\{(1,0),(1,1),(2,0),(2,1)\}$, and $T(x)=\begin{pmatrix} 1&0\\ 1&1\end{pmatrix}x$. We have
    $$T(\mathcal{A})=\{(1,1),(1,2),(2,2),(2,0)\}\neq\mathcal{A}$$
    $$\text{and } T(f(x_1,x_2))=f(x_1,x_1+x_2).$$

    For $f(x_1,x_2)=x_2-x_1+1$, we have $T(f(x_1,x_2))=x_2+1$. Thus,    
     $f(\mathcal{A})=(0,1,2,0)$ and $T(f(\mathcal{A}))=(1,2,1,2)$, meaning that $T$ does not even define an isometry of the code $L(\mathcal{A})$.
\end{example}

\subsection{Borel movements}
    Let $u$ be a monomial in $R=\mathbb{F}_q[x_1,\ldots,x_m]$. If the indeterminate $x_i$ divides $u$, and $j<i$, the monomial $\frac{x_j}{x_i}u$ is called a \textit{Borel movement} of $u$.
  
    We say that a monomial set $L$ satisfies the \textit{Borel property} if $L$ is closed under Borel movements; i.e., if $u$ is a monomial of $L$, then any Borel movement of $u$ is also in $L$. In this case, we say that the monomial code $L(\mathcal{A})$ has the \textit{Borel property}.
    
        Let $p=\mathrm{char}(\mathbb{F}_q)$. For any $m,n\in\mathbb{N}$, we write $m\leq_p n$ if and only if $m_k\leq n_k$ for all $k\in\mathbb{N}$, where $m=\sum_{k=0}^\infty m_kp^k$ and $n=\sum_{k=0}^\infty n_kp^k$ are the $p$-adic expansions.

        Let $u$ be a monomial in $R$. If the indeterminate $x_i$ divides $u$, $\ell\leq_p \deg_{x_i}u$, and $j<i$, then the monomial $\left(\frac{x_j}{x_i}\right)^\ell u$ is called a \textit{standard $p$-Borel movement} of $u$. 

\section{All the points}
This manuscript aims to describe the affine permutation group for certain monomial Cartesian codes $L(\mathcal{A})$. In this section, we study the case when $\mathcal{A} = \mathbb{F}_q^m$. This family of codes covers, for instance, the Reed-Muller codes.

\begin{example}\rm
The affine permutation group of the Reed-Muller codes is the set of all bijective affine transformations \cite{berger}.
\end{example}

In \cite{appc}, the authors proved that a polar code is a decreasing monomial code $L(\mathcal{A})$ where $L$ has the Borel property and $\mathcal{A}=\mathbb{F}_2^m$.

\begin{remark}\rm
    In \cite{appc}, a monomial set closed under divisibility is called weakly decreasing. A weakly decreasing set with the Borel property is called decreasing. Here, we use the term Borel property in analogy to the property satisfied by Borel ideals in characteristic zero \cite{bayer}. 
\end{remark}

The \textit{lower triangular affine transformations} are the transformations $T=Ax+b$ with an invertible lower triangular matrix $A$. We denote the subgroup of lower triangular affine transformations by $\mathrm{LTA}_m$.

% We come to one of the main results of this section, where we prove that the group of affine permutations of a decreasing code with the Borel property contains the lower triangular affine transformations. \ivan{This paragraph is too similar to the one before Theorem 11. I think this is not as strong as Th 11, so maybe rephrase this?}

Similar to the binary case, if $L$ has the Borel property, the $L(\mathbb{F}_q^m)$ contains the lower triangular affine transformations. We generalize this result in theorem \ref{nice theorem}.

\begin{theorem}\label{24.01.20}
 If $L(\mathbb{F}_q^m)$ is a decreasing code with the Borel property, then
 $$\mathrm{LTA}_m\subseteq\mathrm{Perm}_A(L(\mathbb{F}_q^m)).$$
\end{theorem}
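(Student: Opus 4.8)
The plan is to verify the two conditions of Definition~\ref{D:affine-perm} for an arbitrary $T \in \mathrm{LTA}_m$, writing $T(x) = Ax + b$ with $A = (a_{ij})$ lower triangular and invertible, so that $y_i := \sum_{j \le i} a_{ij} x_j + b_i$ with $a_{ii} \ne 0$. Condition (1), $T(\mathbb{F}_q^m) = \mathbb{F}_q^m$, is immediate since an invertible affine map is a bijection of $\mathbb{F}_q^m$ onto itself. The substance is condition (2): for every monomial $u \in L$ we must show $\overline{T(u)} \in \mathrm{Span}_{\mathbb{F}_q}(L)$. Here, because $\mathcal{A} = \mathbb{F}_q^m$ we have $n_i = q$ for all $i$, so $\Delta = \{u \in \mathcal{M} : \deg_{x_i} u < q\}$, and the reduction $\overline{g}$ amounts to reducing each exponent using the relation $x_i^q \equiv x_i$ modulo $I_{\mathbb{F}_q^m}$.

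First I would handle a single variable $x_i$ appearing in $u$ with $\deg_{x_i} u = d < q$. Substituting $x_i \mapsto y_i = a_{ii} x_i + \sum_{j<i} a_{ij} x_j + b_i$ and expanding the power $y_i^{\,d}$ by the multinomial theorem yields a $\mathbb{F}_q$-linear combination of monomials of the form $x_i^{k} \cdot v$, where $0 \le k \le d < q$ and $v$ is a monomial in $x_1, \dots, x_{i-1}$ of degree $d - k$. Crucially, each such monomial $x_i^k v$ divides (up to the scalar and combinatorial coefficients) a monomial obtained from $u$ by repeatedly replacing factors $x_i$ with factors $x_j$ for $j < i$ — that is, by a sequence of Borel movements followed by passing to a divisor. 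Doing this one variable at a time, starting from the largest index, I would argue by induction on the number of variables of index $\ge i$ present in $u$ that every monomial appearing in the full expansion of $T(u)$, after no $x_i^q$-reduction is yet needed since all exponents stay $< q$ automatically at this stage, is obtained from $u$ by Borel movements and then taking divisors; hence it lies in $L$ because $L$ is closed under divisibility and under Borel movements. Therefore $T(u) \in \mathrm{Span}_{\mathbb{F}_q}(L)$ already, with no reduction required, and in particular $\overline{T(u)} = T(u) \in \mathrm{Span}_{\mathbb{F}_q}(L)$.

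Wait — one must be careful: after substitution and expansion the intermediate monomials have total degree equal to $\deg u$, and each individual exponent in variable $x_i$ is at most its original value, so indeed all exponents remain below $q$ and no reduction modulo $x_i^q - x_i$ is ever triggered. This is the technical heart, and it is exactly what makes the "all the points" case cleaner than a general Cartesian set. The key combinatorial claim to nail down rigorously is: \emph{if $v$ is a monomial obtained from $u$ by replacing some occurrences of $x_i$ by variables $x_{j}$ with $j < i$ (possibly with repetition and collision of exponents) and then possibly dropping some factors, then $v \in L$.} The replacement step is a composition of Borel movements (each single swap $x_j/x_i$ is one), so closure under Borel movements gives the replaced monomial in $L$; dropping factors is taking a divisor, so closure under divisibility finishes it. I would isolate this as a short lemma and then simply observe that every monomial term in the multinomial expansion of $T(u) = \prod_i y_i^{\deg_{x_i} u}$ is of this form.

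The main obstacle I anticipate is bookkeeping the multi-variable expansion cleanly: when $u$ involves several variables $x_{i_1} > x_{i_2} > \cdots$, substituting all of them simultaneously produces cross terms, and one must check that collisions (a variable $x_j$ receiving contributions from the expansion of several different $y_{i}$'s, pushing its exponent up) do not violate the divisibility/Borel-descent structure or exceed degree $q$. The degree bound is automatic since total degree is preserved and total degree of $u$ is already $< \sum_i(q-1) \le mq$, but per-variable we need $\deg_{x_j}$ of each resulting monomial to stay $< q$; this holds because each such exponent is bounded by $\sum_{i \ge j} \deg_{x_i} u \le \deg u$, which need not be $< q$ in general — so in fact a reduction \emph{can} occur. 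I would resolve this by noting that if $x_j^e$ with $e \ge q$ appears, then $\overline{x_j^e}$ is a sum of lower powers $x_j^{e'}$ with $e' \le e$ and $e' \equiv$ appropriate residue, each still a divisor-or-Borel-descendant of $u$, so it remains in $\mathrm{Span}_{\mathbb{F}_q}(L)$; thus closure under taking $\overline{(\cdot)}$ is preserved. Assembling these pieces — bijectivity for (1), the Borel-descent lemma, the multinomial expansion, and the reduction remark — completes the proof that $\mathrm{LTA}_m \subseteq \mathrm{Perm}_A(L(\mathbb{F}_q^m))$.
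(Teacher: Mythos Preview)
Your proof is correct and follows the same approach as the paper: verify bijectivity for condition~(1), then observe that every monomial in the expansion of $T(u)$ is obtained from $u$ by Borel movements and taking divisors, hence lies in $L$. Your worry about reduction modulo $x_j^q-x_j$ is unnecessary, though: the Borel property together with $L\subseteq\Delta$ forces $\deg u<q$ for every $u\in L$ (iterated Borel movements push all factors onto $x_1$, giving $x_1^{\deg u}\in L\subseteq\Delta$), so every exponent appearing in $T(u)$ is already below $q$ and the paper's brief argument goes through without any reduction step.
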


\begin{proof}
    Let $T=Ax+b$ an element in $\mathrm{LTA}_m$.
    As $\det A\neq 0$, $T$ is an automorphism of $\mathbb{F}_q^n$, meaning that $T(\mathbb{F}_q^n)=\mathbb{F}_q^n$.
    
    Let $x^\nu$ be a monomial in $L$. Note that $$T(x_i) = \sum_{j=1}^i A_{ij}x_j+b_i.$$ Thus, $T(x^\nu)$ is a polynomial supported on the Borel movements of the divisors of $x^\nu$. Since $L$ is decreasing and satisfies the Borel property, $T( x^\nu)\in\mathrm{Span}_{\mathbb{F}_q}(L)$.
\end{proof}

We can extend Theorem~\ref{24.01.20} to any decreasing code $L(\mathbb{F}_q^m)$ without the Borel property by looking at the pairs $(x_i,x_j)$ such that if $u$ is an element in $L$, and $x_i$ divides $u$, then the monomial $\frac{x_j}{x_i}u$ is also an element in $L$. Defining a set of matrices with the $(i,j)$ entry equals zero if $(x_i, x_j)$ is not such a pair, we can obtain a subgroup of affine transformations fixing the code. However, due to the characteristic of the field, sets of monomials with the Borel property are not the only ones that the action of such matrices can fix, as the following example shows. %\ivan{i am not sure it's clear what we mean by "more sets of monomials". Would it be better to say "larger monomial sets"? I feel like it's kind of an inside-out logic. In the example below we have $L$ without the Borel property and the corresponding matrix subgroup is still lower triangular}

\begin{example}\rm
    Consider the following sets of monomials in $\mathbb{F}_9[x_1,x_2]$: $L_1$ is the set of monomials of degree at most 3,
    $L_2=\{x_2^4,x_1x_2^3,x_1^3x_2,x_1^4\}$, and $L = L_1 \cup L_2$.
    Take the affine transformation $T(x)=\begin{pmatrix} a&0\\  b&c\end{pmatrix}x$.
    Observe $T(L_1) \subseteq \mathrm{Span}_{\mathbb{F}_9}(L_1)$ by Theorem~\ref{24.01.20} and because $L_1$ has the Borel property.
    
    As $T(x_1)=ax_1$ and $T(x_2)=bx_1+cx_2$, we have
    $$\begin{array}{ccc}
        T(x_2^4)&=&b^4\ x_1^4\ +\ b^3c\ x_2x_1^3\ +\ bc^3\ x_2^3x_1\ +\ c^4\ x_2^4\\
        T(x_1x_2^3)&=&ab^3\ x_1^4\ +\ ac^3\ x_1x_2^3\\
        T(x_1^3x_2)&=&a^3b\ x_1^4\ +\ a^3c\ x_1^3x_2\\
        T(x_1^4)&=&a^4x_1^4.
    \end{array}$$
    Thus, $T(L_2) \subseteq \mathrm{Span}_{\mathbb{F}_9}(L_2)$. Note that the characteristic of the field $\mathbb{F}_9$ matters when computing $T(x_2^4)$, $T(x_1x_2^3)$ and $ T(x_1^3x_2)$. %\ivan{it matters for the 2nd and 3rd computations too, isn't it?}

The monomial $x_1^2x_2^2$ is a Borel movement of $x_1x_2^3 \in L$, but $x_1^2x_2^2$ is not in $L$. So, $L$ does not have the Borel property.

We conclude that any lower triangular affine transformation $T$ fixes $L$, meaning $T(L) \subseteq \mathrm{Span}_{\mathbb{F}_9}(L)$, even when $L$ does not have the Borel property.
\end{example}

We capture some of those sets using the $p$-Borel movement.

\begin{definition}\rm
    For a monomial set $L \subset \mathbb{F}_q[x_1,\ldots,x_m]$, we define the $p$-Borel graph of $L$ as the directed graph $G_L$ with the vertex set $\{x_1,\ldots,x_m\}$ and the edge set $E(G_L)$ where $(x_i,x_j)\in E(G_L)$ if and only if for any $u\in L$ divisible by $x_i$ the monomial $\left(\frac{x_j}{x_i}\right)^\ell u$ lies in $L$ for all $0\leq\ell\leq_p \deg_{x_i}(u)$.

    For $u\in L$, if $u'$ can be obtained through a sequence of $p$-Borel movements involving pairs $(x_i,x_j)\in E(G_L)$, we say that $u'$ is a valid $p$-Borel movement of $u$ with respect to $L$.
\end{definition}
%\ivan{there were a few typos in the definition, so I rewrote it. Please check if you like it}

\begin{definition}\rm
    Let $G_L$ be the $p$-Borel graph of the monomial set $L$. We define the space of $L$-stable matrices as 
    $$M_L=\{A\in\mathbb{F}_q^{m\times m}\ :\ A_{ij}=0\ \text{if}\ (x_i,x_j)\notin E(G_L)\}.$$
\end{definition}

We come to one of the main results of this section, where we prove that the group of affine permutations of a decreasing code contains the affine permutations that depend on the invertible $L$-stable matrices.
\begin{theorem}\label{nice theorem}
    If $L(\mathbb{F}_q^m)$ is a decreasing code, then
    $$\{Ax+b : A \text{ invertible}, A\in M_L\}\subseteq\mathrm{Perm}_{A}(L(\mathbb{F}_q^m)).$$
\end{theorem}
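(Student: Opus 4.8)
The plan is to verify the two conditions of Definition~\ref{D:affine-perm} for a transformation $T = Ax+b$ with $A$ invertible and $A \in M_L$. Condition (1), namely $T(\mathbb{F}_q^m) = \mathbb{F}_q^m$, is immediate since $T$ is a bijection of $\mathbb{F}_q^m$ whenever $\det A \neq 0$; this mirrors the opening of the proof of Theorem~\ref{24.01.20}. The substance is condition (2): for every monomial $u = x^\nu \in L$, we must show $\overline{T(u)} \in \mathrm{Span}_{\mathbb{F}_q}(L)$. Since $\mathcal{A} = \mathbb{F}_q^m$ has the property that $\deg_{x_i} u < q$ for all $u \in \Delta$ and the reduction $\overline{(\cdot)}$ only lowers degrees modulo $x_i^q - x_i$, I will first argue that it suffices to show $T(u)$ itself is supported on monomials that are valid $p$-Borel movements of divisors of $u$, all of which lie in $L$ by decreasingness plus the definition of $E(G_L)$; the bar can then only replace a monomial by another monomial of no larger exponent vector, which again stays in $L$ by decreasingness.

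The key computational step is to expand $T(u) = \prod_{i} \bigl(\sum_{j} A_{ij} x_j + b_i\bigr)^{\nu_i}$ and understand which monomials appear. Because $A \in M_L$, the only nonzero off-diagonal entries $A_{ij}$ occur for $(x_i, x_j) \in E(G_L)$, so $T(x_i) = A_{ii} x_i + \sum_{(x_i,x_j) \in E(G_L)} A_{ij} x_j + b_i$. Raising this to the power $\nu_i$ and using the multinomial theorem in characteristic $p$ (Lucas' theorem / the Freshman's dream for $p$-th powers), a monomial $x_j^{\ell}$ can absorb exponent from $x_i^{\nu_i}$ only in amounts $\ell$ with $\ell \leq_p \nu_i = \deg_{x_i}(u)$ — this is precisely the arithmetic that makes the multinomial coefficient $\binom{\nu_i}{\ell_1, \ell_2, \ldots}$ nonzero mod $p$. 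Thus each monomial in the expansion of $\bigl(\sum_j A_{ij}x_j + b_i\bigr)^{\nu_i}$ is obtained from $x_i^{\nu_i}$ by a sequence of $p$-Borel-type exchanges along edges of $G_L$ (together with dropping some factors, which corresponds to taking divisors because of the $+b_i$ term). Multiplying across $i$, every monomial in $T(u)$ is a valid $p$-Borel movement of a divisor of $u$ with respect to $L$, hence lies in $L$.

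The main obstacle I anticipate is bookkeeping the interaction between the different variables: a single monomial in $T(u)$ is built by simultaneously choosing how much of each $x_i^{\nu_i}$ goes to each allowed $x_j$, and one must check that the resulting monomial can be reached by a legal \emph{sequence} of single-pair $p$-Borel movements, each of which lands inside $L$ at every intermediate step (not just at the end). This requires an ordering argument: process the variables so that when we move exponent from $x_i$ to $x_j$, we have already fixed the contributions to $x_i$ from variables $x_k$ with edges $(x_k, x_i)$; the edge condition in the definition of $G_L$ — quantified over \emph{all} $u \in L$ divisible by $x_i$ and all $\ell \leq_p \deg_{x_i}(u)$ — is exactly what guarantees each intermediate monomial stays in $L$. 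A secondary technical point is the $+b_i$ constant term, which I will handle by noting that expanding $\bigl(\sum_j A_{ij} x_j + b_i\bigr)^{\nu_i}$ produces monomials in the $x_j$'s of total degree $\le \nu_i$, i.e.\ $p$-Borel movements applied to \emph{divisors} $x_i^{\nu_i'}$ of $x_i^{\nu_i}$, and divisors of $u$ lie in $L$ since $L$ is decreasing. Combining these observations gives $\overline{T(L)} \subseteq \mathrm{Span}_{\mathbb{F}_q}(L)$, completing condition (2).
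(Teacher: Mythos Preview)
Your proposal is correct and follows essentially the same route as the paper: verify that $T$ is a bijection of $\mathbb{F}_q^m$, expand $T(u)=\prod_i\bigl(\sum_j a_{ij}x_j+b_i\bigr)^{\nu_i}$ via the multinomial theorem, use Lucas' criterion to see that every surviving monomial is a $p$-Borel movement (along edges of $G_L$) of a divisor of $u$, and conclude from decreasingness and the definition of $E(G_L)$ that each such monomial lies in $L$. Your explicit flagging of the ordering issue for intermediate monomials is a refinement the paper leaves implicit, but the resolution you sketch---that the edge condition is quantified over all monomials of $L$, so each single movement preserves membership in $L$---is exactly what closes that gap.
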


\begin{proof}
Let $A=(a_{ij})_{ij=1}^m\in M_L$ be invertible and
let $T(x)=Ax+b$ be the corresponding affine transformation, for some $b\in\mathbb{F}_q^m$. Clearly, 
$T(\mathbb{F}_q^n)=\mathbb{F}_q^n$.

%\ivan{How about we start it as you wrote below. Then comment everything starting from your comment and to the end of the proof}

Let $u=x_1^{v_1}\cdots x_m^{v_m}\in L$. We have $T(u)=y_1^{v_1}\cdots y_m^{v_m}$, where 
$y_i=\sum_{j=1}^m a_{ij} x_j+b_i$. Consider
$w\in\mathrm{supp}(T(u))$. Then $w=u_1\cdots u_m$ where each $u_i$ is in the support of $y_i^{v_i}$. We will show below that each $u_i$ corresponds to a sequence of valid $p$-Borel movements of  $x_i^{v'_i}$ for some $v'_i\leq v_i$. This implies that $w$ is realized through a sequence of valid $p$-Borel movements of a divisor of $u$, and thus $w\in L$, which implies that $T(u)$ lies in the span of $L$.

To show the claim, let us compute the support of $y_i^v$ for some $1\leq v< q$. We have
    \begin{align*}
    y_i^v&=\left(\sum_{j=1}^m a_{ij} x_j+b_i\right)^v\\&=\sum_{\substack{ k_0+\ldots+k_m=v,\\ k_s\geq 0}}{v\choose k_0,\ldots,k_m} b_i^{k_0}\prod_{j=1}^m (a_{ij}x_j)^{k_j}.
    \end{align*}

    It is known that $p$ does not divide ${v-\sum_{s=0}^{t-1} k_s \choose k_{t}}$ if and only if $k_{t}{\leq}_p v-\sum_{s=0}^{t-1} k_s$ (cf. \cite[Corollary II.3]{pardue}).
    If a monomial $u'$ appears in the support of $y_i^v$, then there are $k_0,\dots, k_m$ with $k_0+k_1+\ldots+k_m=v$ such that
    \begin{equation}\label{eq prop}
    u'=(x_i)^{v-k_0}\left(\frac{x_1}{x_i}\right)^{k_1}\cdots\left(\frac{x_m}{x_i}\right)^{k_m},
    \end{equation}
    where each $k_t\leq_p v-\sum_{\substack{s=0\\ s\neq i}}^{t-1} k_s$. Thus, if $k_t\neq 0$, then $(x_i,x_t)\in E(G_L)$ or $i=t$. Thus, the monomials in the support of $y_i^v$ are valid $p$-Borel movements.
    \end{proof}

\section{Multiplicative subgroups}
We now consider the case when every $A_i$ of the Cartesian set $\mathcal{A}=\prod_{i=1}^m A_i$ is either $\mathbb{F}_q$ or a subgroup of $\mathbb{F}_q^\ast$. This family of evaluation codes contains, for example, the Reed-Solomon codes with $\mathbb{F}_q^*$ as the evaluation set and the well-known family of toric codes whose evaluation set is the torus \cite{toric}.

Let $\mathbb{F}_q^*=\langle \beta \rangle = \langle 1, \beta, \ldots, \beta^{q-2} \rangle $. Any proper subgroup of $\mathbb{F}_q^*$ of size $s$ has the form $G=\langle \beta ^t\rangle$, with  $1<t$, $t|q-1$, and $|G|=s=\frac{q-1}{t}$.
\begin{remark}\label{24.01.21}
Note that $G=\{x\in\mathbb{F}_q\ :\ x^s-1=0\}$. In particular, the sum of the elements of $G$ is zero when $|G|>1$.
\end{remark}

\begin{lemma}\label{L0}
Let $G_1$ and $G_2$ be nontrivial (not necessarily distinct) subgroups of $\mathbb{F}_q^\ast$. For any $a \in\mathbb{F}_q$ and $b  \in \mathbb{F}_q^*$, we have
$$
a  G_1+b  \, \neq \, G_2.
$$
\end{lemma}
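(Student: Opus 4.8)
The plan is to argue by contradiction: assume $aG_1 + b = G_2$ and derive a contradiction, splitting into the cases $a = 0$ and $a \neq 0$. The case $a = 0$ is immediate, since then $aG_1 + b = \{b\}$ is a single element, whereas $|G_2| \geq 2$ because $G_2$ is nontrivial; so from here on I would assume $a \in \mathbb{F}_q^*$.

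For $a \neq 0$ the main tool is the linear functional ``sum of all elements of a set.'' By Remark~\ref{24.01.21}, any nontrivial subgroup $G$ of $\mathbb{F}_q^*$ satisfies $\sum_{g \in G} g = 0$. Since $a \neq 0$, the affine map $x \mapsto ax + b$ is a bijection of $\mathbb{F}_q$, so $aG_1 + b = \{ag + b : g \in G_1\}$ consists of exactly $|G_1|$ distinct elements. Applying the sum functional to both sides of the assumed equality $aG_1 + b = G_2$ then gives
\[
\sum_{x \in aG_1 + b} x \;=\; a\sum_{g \in G_1} g + |G_1|\, b \;=\; |G_1|\, b,
\qquad
\sum_{x \in G_2} x \;=\; 0,
\]
and hence $|G_1|\, b = 0$ in $\mathbb{F}_q$.

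To finish, I would observe this is impossible: $b \neq 0$ by hypothesis, and $|G_1|$ is nonzero in $\mathbb{F}_q$ because $|G_1|$ divides $q-1$, which is coprime to $p = \mathrm{char}(\mathbb{F}_q)$ since $q$ is a power of $p$. Thus $|G_1|\, b \neq 0$, a contradiction. I do not expect a genuine obstacle; the only point that needs care is that the comparison is between $aG_1+b$ and $G_2$ \emph{as sets}, so one must first invoke injectivity of the affine map (for $a \neq 0$) to know that $aG_1+b$ has $|G_1|$ distinct elements before computing its sum. Notably, no appeal to the cyclic structure of $\mathbb{F}_q^*$ or to any relation between $G_1$ and $G_2$ is required.
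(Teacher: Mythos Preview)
Your proof is correct and follows essentially the same approach as the paper: assume $aG_1+b=G_2$, sum both sides, and use Remark~\ref{24.01.21} together with $p\nmid |G_1|$ to reach a contradiction. Your explicit treatment of the case $a=0$ and of injectivity of $x\mapsto ax+b$ is slightly more careful than the paper's version, which tacitly identifies $\sum_{x\in aG_1+b}x$ with $\sum_{g\in G_1}(ag+b)$.
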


\begin{proof}

Assume $a  G_1+b   = G_2$. Then, the sum of the elements of $a  G_1+b  $ equals the sum of the elements of $G_2$, which is $0$ by Remark~\ref{24.01.21}. Then
$$
0=\sum_{g\in G_1} (a  g+b  )=a  \sum_{g\in G_1} g+sb  =sb  .
$$
Since $b  \neq 0$ and $\mathrm{char}(\mathbb{F}_q)\!\not{|}\,s$, we get a contradiction.
\end{proof}

\begin{lemma} \label{LA}
%Assume $2 \leq r \leq m$ and 
Let $\mathcal{A}=\prod_{i=1}^m G_i$, where every $G_i$ is a nontrivial subgroup of $\mathbb{F}_q^{\ast}$.
For any $1\leq i\leq m$, $a\in\mathbb{F}_q^m$ of weight at least two, and $b \in\mathbb{F}_q$, there exists $g$ in $\mathcal{A}$ such that $$a\cdot g+b \notin G_i,$$ where $\cdot$ represents the standard Euclidean inner product. 

% Let $m\geq 2$  and $q\geq 3$. For any $a\in \mathbb T^m$ there exists $\mathbf{g}\in G^m$ such that $a\cdot \mathbf{g}\notin G$.
\end{lemma}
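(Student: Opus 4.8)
The plan is to argue by contradiction: suppose that $a\cdot g+b\in G_i$ for \emph{every} $g\in\mathcal{A}$ and extract from this a configuration that Lemma~\ref{L0} forbids. Assume first that $a_i\neq 0$ (the main case). Since $a$ has weight at least two, there is a further index $k\neq i$ with $a_k\neq 0$; fix every remaining coordinate $g_\ell$ with $\ell\notin\{i,k\}$ to an arbitrary element of $G_\ell$ and absorb those terms into a constant $c=b+\sum_{\ell\notin\{i,k\}}a_\ell g_\ell$. The assumption then reads
\[
a_ig_i+a_kg_k+c\in G_i\qquad\text{for all }g_i\in G_i,\ g_k\in G_k.
\]

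The second step is to promote this to a set \emph{equality} that Lemma~\ref{L0} can rule out. Since $|G_k|\ge 2$ and the affine map $g_k\mapsto a_kg_k+c$ vanishes for at most one argument, choose $g_k^{0}\in G_k$ with $b_0:=a_kg_k^{0}+c\neq 0$. Specializing $g_k=g_k^{0}$ gives $a_iG_i+b_0\subseteq G_i$. But multiplication by $a_i\neq 0$ is a bijection of $\mathbb{F}_q$, so $a_iG_i+b_0$ and $G_i$ are finite sets of the same cardinality, and an inclusion of equal-size sets is an equality: $a_iG_i+b_0=G_i$. This contradicts Lemma~\ref{L0}, applied with both subgroups equal to $G_i$, scalar $a_i\neq 0$, and nonzero shift $b_0$. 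Hence some $g\in\mathcal{A}$ satisfies $a\cdot g+b\notin G_i$.

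The step I expect to be the real obstacle is the remaining case $a_i=0$, where the $i$-th coordinate is not one of the two singled out above. The same reduction then produces only an inclusion $a_jG_j+b_0\subseteq G_i$ for support indices $j\neq k$, with $G_j$ possibly a proper subgroup of strictly smaller order, so the cardinality count no longer forces equality and Lemma~\ref{L0} does not apply directly. One would have to use that $G_i$ must contain the full sumset $a_jG_j+a_kG_k+c=\bigcup_{g_k\in G_k}\bigl(a_jG_j+(a_kg_k+c)\bigr)$, bounding $|a_jG_j+a_kG_k|$ from below and playing it against the relations $|G_j|,|G_k|,|G_i|\mid q-1$ and the fact that $\mathrm{char}(\mathbb{F}_q)$ divides none of these orders (so, as in Lemma~\ref{L0}, no coset of such a subgroup is stable under a nonzero additive translation). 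Everything else is routine bookkeeping.
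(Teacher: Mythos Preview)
Your Case~1 ($a_i\neq 0$) is correct and in fact more careful than the paper's own argument. The paper relabels so that $a_1,a_2\neq 0$ (without tracking where the index $i$ lands), sets $g_3=\cdots=g_m=1$, chooses $g_2$ so that $d:=a_2g_2+a_3+\cdots+a_m+b\neq 0$, and then asserts that ``by Lemma~\ref{L0} there exists $g_1\in G_1$ with $a_1g_1+d\notin G_i$''. But Lemma~\ref{L0} only gives $a_1G_1+d\neq G_i$; to upgrade this to $a_1G_1+d\not\subseteq G_i$ one needs a cardinality comparison, and that comparison is available precisely when the free coordinate is $i$ itself (so that both sides have size $|G_i|$). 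Your proof supplies exactly this missing step by insisting that one of the two varying indices be $i$.

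Your suspicion that Case~2 ($a_i=0$) is the real obstacle is fully justified: the statement is in fact \emph{false} in that case, so neither your sketch nor the paper's argument can be completed. Over $\mathbb{F}_{13}$ take $m=3$, $G_1=G_2=\{1,12\}$, $G_3=\{1,3,4,9,10,12\}$ (the quadratic residues), $i=3$, $a=(1,2,0)$, $b=0$. Then $a\cdot g+b=g_1+2g_2$ takes only the values $3,\,12,\,1,\,10$ as $(g_1,g_2)$ ranges over $\{1,12\}^2$, all of which lie in $G_3$; hence $a\cdot g+b\in G_i$ for every $g\in\mathcal{A}$. (Over $\mathbb{F}_7$, the same $a,b$ with $G_1=G_2=\{1,6\}$ and $G_3=\mathbb{F}_7^\ast$ gives an even smaller counterexample.) The lemma therefore needs an additional hypothesis---for instance $a_i\neq 0$, or that some support index $j$ of $a$ satisfies $|G_j|\geq |G_i|$---and your Case~1 already establishes the first of these corrected versions.
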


%\ivan{What does the dot product mean if $r$ is less than $m$? }

\begin{proof}
%Without loss of generality, we can assume $i=1$. 
Without loss of generality, we may assume that $a_1,a_2\neq 0$, where $a=(a_1,\ldots, a_m)$. 
If $a_2+\cdots+a_m+b \neq 0$, we let $g_2=1$; otherwise, we let $g_2$ be an arbitrary non-unit element of $G_2$.
%Define $g_2=1 \in G_2$ if , or $g_2\neq 1$ as any element in $G_2$ otherwise. 
Then $d=g_2a_2+a_3+\cdots+a_m+b \neq 0$.
By Lemma~\ref{L0}, there exists $g_1 \in G_1$ such that $a_1g_1+d\notin G_i$. Therefore, we can take $g=(g_1,g_2,1,\dots,1)$ in $\mathcal{A}$.
%By Lemma~\ref{L0}, there exists $g_1 \in G_1$ such that $a\cdot g=a_1g_1+x\notin G_1$ where $g=(g_1,g_2,1,\dots,1)\in\mathcal{A}$.
\end{proof}

% \begin{lemma} \label{LB} Let $k\geq 2$  and $q\geq 3$. For any $a\in \mathbb T^k$ and $\lambda\in\mathbb F_q^*$ there exists $\mathbf{g}\in \mathbb G^k$ such that $a\cdot \mathbf{g}+\lambda \notin G$.
% \end{lemma}

% \begin{proof}
% Let $a=(a_1,a_2,\ldots, a_k)$ and $\lambda\neq 0$. We can assume  $x=a_2+\cdots+ a_k+\lambda\neq 0$, then for  $a_1=\beta^i$, there exist $g_1 \in G$ such that $\beta^ig_1+x\notin G$. Let $\mathbf{g}=(g_1,1,\dots,1)$ and
% $$
% a\cdot \mathbf{g}=a_1g_1+a_2+\cdots+ a_k+\lambda=\beta^ig+x\notin G
% $$
% \end{proof}

\begin{proposition}\label{Prop:multisub}
Assume $\mathcal{A}=\prod_{i=1}^m G_i$, where every $G_i$ is a nontrivial subgroup of $\mathbb{F}_q^{\ast}$.
Then, an affine transformation $T(x)=Ax+b$ satisfies $T(\mathcal{A})=\mathcal{A}$ if and only if $b=0$ and $A=P_{\sigma}D$, for a permutation matrix $P_\sigma$ and a diagonal matrix $D$ such that
    $G_{\sigma(i)}=G_i$ and $D_{ii}\in G_i$ for all $1\leq i\leq m$.  
\end{proposition}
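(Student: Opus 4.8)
The plan is to treat the two implications separately; the direct implication is a routine check, while the converse carries all the work. For the check, assume $b=0$ and $A=P_\sigma D$ with $G_{\sigma(i)}=G_i$ and $D_{ii}\in G_i$ for all $i$. For $g=(g_1,\dots,g_m)\in\mathcal A$, the $i$-th coordinate of $T(g)=P_\sigma Dg$ is $D_{\sigma(i)\sigma(i)}\,g_{\sigma(i)}$, a product of two elements of $G_{\sigma(i)}=G_i$, hence in $G_i$; so $T(\mathcal A)\subseteq\mathcal A$. Since $A$ is invertible, $T$ is a bijection of $\mathbb{F}_q^m$, and an injection of the finite set $\mathcal A$ into itself is onto, so $T(\mathcal A)=\mathcal A$.

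For the converse, the first observation is that $T(\mathcal A)=\mathcal A$ forces $T|_{\mathcal A}$ to be a \emph{bijection} of $\mathcal A$ (a surjection of a finite set onto itself), so $T$ is injective on $\mathcal A$. Writing $a^{(i)}$ for the $i$-th row of $A$, the inclusion $T(g)\in\mathcal A$ says $a^{(i)}\cdot g+b_i\in G_i$ for all $g\in\mathcal A$; Lemma~\ref{LA} then rules out $a^{(i)}$ having two or more nonzero entries, and $a^{(i)}=0$ is impossible since otherwise the $i$-th coordinate of $T$ would be the constant $b_i$ while $|G_i|\ge 2$. Hence each row of $A$ has exactly one nonzero entry; let $\sigma(i)$ be its column and $\lambda_i\in\mathbb{F}_q^*$ its value, so $T(g)_i=\lambda_i g_{\sigma(i)}+b_i$. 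If $\sigma$ were not a permutation, some column $j$ of $A$ would vanish, $x_j$ would not appear in $T$, and two points of $\mathcal A$ differing only in the $j$-th coordinate (possible as $|G_j|\ge 2$) would collide, contradicting injectivity on $\mathcal A$. Thus $\sigma\in S_m$ and $A=P_\sigma D$ with $D$ diagonal determined by $D_{\sigma(i)\sigma(i)}=\lambda_i$.

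It remains to determine $b$ and compare the groups. As $g$ ranges over $\mathcal A$ the entry $g_{\sigma(i)}$ ranges over all of $G_{\sigma(i)}$, and since $T|_{\mathcal A}$ is onto one obtains the exact equality $\lambda_i G_{\sigma(i)}+b_i=G_i$. The crucial step---the single place Lemma~\ref{L0} is used---is that this equality forces $b_i=0$, for otherwise $\lambda_i G_{\sigma(i)}+b_i\ne G_i$. With $b=0$ we get $\lambda_i G_{\sigma(i)}=G_i$; since $1\in G_{\sigma(i)}$ this gives $\lambda_i\in G_i$, whence $G_{\sigma(i)}=\lambda_i^{-1}G_i=G_i$ because $\lambda_i^{-1}\in G_i$. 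Finally $D_{\sigma(i)\sigma(i)}=\lambda_i\in G_i=G_{\sigma(i)}$, i.e.\ $D_{kk}\in G_k$ for every $k$, which completes the proof. The only delicate point I anticipate is the clerical matching of indices between the factorization $A=P_\sigma D$ and the statement ``$G_{\sigma(i)}=G_i$ and $D_{ii}\in G_i$''; all the mathematical substance is supplied by Lemmas~\ref{L0} and~\ref{LA}.
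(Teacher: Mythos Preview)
Your proof is correct and follows essentially the same route as the paper: use Lemma~\ref{LA} to force each row of $A$ to have weight one, then Lemma~\ref{L0} on the resulting equality $\lambda_i G_{\sigma(i)}+b_i=G_i$ to kill $b$, and finally read off $G_{\sigma(i)}=G_i$ and $\lambda_i\in G_i$. Your write-up is in fact more careful than the paper's in two places---you explicitly argue that $\sigma$ must be a permutation (via injectivity on $\mathcal A$) and you track the index bookkeeping in $A=P_\sigma D$---but the mathematical content is the same.
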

%\ivan{Is $\mathcal{G}$ supposed to be $\mathcal{A}$ or the other way around?}
\begin{proof}
The ``if'' part is clear.
Let $T(x)=Ax+b$ be an affine transformation such that $T(\mathcal{A})=\mathcal{A}$. We claim that each row of $A$ has exactly one nonzero entry. Clearly, $A$ cannot have zero rows. Let $a$ be a row of $A$ with a weight of at least two. By Lemma \ref{LA}, there is $g\in\mathcal{A}$ such that $a\cdot g+b_i\notin G_i$, where $i$ is the position of the row $a$ in $A$. Thus, $T(\mathcal{A})\neq \mathcal{A}$ and we have a contradiction.

Now, since the weight of each row of $A$ is one, Lemma \ref{L0} implies that $b=0$ and so $T(x)=Ax$. Let $a_{ij}$ be the only nonzero entry of the $i$-th row of $A$. Then $a_{ij}G_j=G_i$. But then $G_i=G_j$ and $a_{ij}\in G_i$, so we have the conclusion.
\end{proof}

Let $\mathbb T^m=({\mathbb F}_q^*)^m$ be the $m$-dimensional algebraic torus.

\begin{corollary}
     An affine transformation $T(x)=Ax+b$ satisfies $T(\mathbb T^m)=\mathbb T^m$ if and only if $b=0$ and $A=PD$, where $P$ is a permutation matrix and $D$ is a nonsingular diagonal matrix.  
\end{corollary}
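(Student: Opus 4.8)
The plan is to deduce this directly from Proposition~\ref{Prop:multisub} by taking every factor of the Cartesian product to be the full multiplicative group. Indeed, $\mathbb T^m=\prod_{i=1}^m \mathbb F_q^\ast$, and $\mathbb F_q^\ast$ is a nontrivial subgroup of itself whenever $q>2$, which is the relevant range here since the standing assumption $|A_i|\geq 2$ forces $q-1\geq 2$. (For $q=2$ the torus $\mathbb T^m$ is a single point and the statement fails, so this case is legitimately excluded.) I would also note in passing that the hypothesis $\mathrm{char}(\mathbb F_q)\nmid|G|$ used in Lemma~\ref{L0} is automatic in this setting, because $|\mathbb F_q^\ast|=q-1$ is coprime to $p=\mathrm{char}(\mathbb F_q)$, and that the sum of the elements of $\mathbb F_q^\ast$ is $0$ as required by Remark~\ref{24.01.21}.

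Next I would invoke Proposition~\ref{Prop:multisub} with $G_i=\mathbb F_q^\ast$ for all $i$. It gives that $T(x)=Ax+b$ satisfies $T(\mathbb T^m)=\mathbb T^m$ if and only if $b=0$ and $A=P_\sigma D$ for a permutation matrix $P_\sigma$ and a diagonal matrix $D$ with $G_{\sigma(i)}=G_i$ and $D_{ii}\in G_i$ for all $i$. Since all the groups $G_i$ coincide with $\mathbb F_q^\ast$, the condition $G_{\sigma(i)}=G_i$ is satisfied by every permutation $\sigma$, so $P_\sigma$ ranges over all permutation matrices; and $D_{ii}\in\mathbb F_q^\ast$ says exactly that $D$ is a nonsingular diagonal matrix. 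This is the claimed characterization, so the ``only if'' direction is immediate. For the ``if'' direction I would record the one-line verification that a nonsingular diagonal matrix scales each coordinate of a point of $\mathbb T^m$ by a nonzero scalar, hence preserves $\mathbb T^m$, and a permutation matrix merely reorders the (all nonzero) coordinates, so $PD$ preserves $\mathbb T^m$.

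I do not anticipate a genuine obstacle: the result is a pure specialization of Proposition~\ref{Prop:multisub}, and the only points meriting a sentence of care are the degenerate case $q=2$ and the observation that, precisely because all the factors of $\mathbb T^m$ are the \emph{same} group, the permutation component is entirely unconstrained (unlike in the general statement of the proposition, where $\sigma$ must respect the isomorphism type of the factors).
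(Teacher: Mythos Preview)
Your proposal is correct and matches the paper's approach: the corollary is stated immediately after Proposition~\ref{Prop:multisub} without proof, so it is intended as the direct specialization $G_i=\mathbb{F}_q^\ast$ that you carry out. Your additional remarks on the excluded case $q=2$ and on why the permutation $\sigma$ becomes unconstrained are accurate and make explicit what the paper leaves implicit.
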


We now consider the Cartesian set $\mathcal{A}=\prod_{i=1}^m A_i$, where for every $1\leq i\leq m$ either $A_i=\mathbb{F}_q$ or $A_i=\mathbb{F}_q^\ast$. Without loss of generality, we may assume $\mathcal{A}=\mathbb{F}_q^s\times(\mathbb{F}_q^\ast)^{m-s}$ since the permutation of variables is an affine transformation corresponding to a permutation matrix.

\begin{proposition}\label{24.01.24}
    Assume $\mathcal{A}=\mathbb{F}_q^s\times(\mathbb{F}_q^\ast)^{m-s}$ and let $T(x)=Ax+b$ be an affine transformation. We have $T(\mathcal{A})=\mathcal{A}$ if and only if $b_i=0$ for $i>s$ and
    $$A=\begin{pmatrix}
    A_1& A_2\\
    0&PD\end{pmatrix},$$
    \noindent where $P$ is an $(m-s)\times (m-s)$ permutation matrix, $D$ a nonsingular diagonal matrix, and $A_1$ is an $s\times s$ nonsingular matrix.
\end{proposition}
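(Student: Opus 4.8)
The plan is to reduce the statement to the pure-torus case already settled by the Corollary following Proposition~\ref{Prop:multisub}, by first showing that the ``lower-left'' part of $A$ vanishes. Write $x=(x',x'')$ with $x'=(x_1,\dots,x_s)$ and $x''=(x_{s+1},\dots,x_m)$, split $b=(b',b'')$ accordingly, and partition $A$ into blocks $A_1$ (size $s\times s$), $A_2$ ($s\times(m-s)$), $A_3$ ($(m-s)\times s$), $A_4$ ($(m-s)\times(m-s)$), so that the asserted form is exactly $A_3=0$, $b''=0$, $A_4=PD$, with $A_1$ nonsingular. The ``if'' direction is immediate: for $(g',g'')\in\mathcal{A}$ the last $m-s$ coordinates of $T(g',g'')$ are $PDg''$, which lies in $(\mathbb{F}_q^\ast)^{m-s}$ since $D$ is nonsingular diagonal and $P$ permutes, while the first $s$ coordinates lie in $\mathbb{F}_q^s$ automatically; hence $T(\mathcal{A})\subseteq\mathcal{A}$, and since $\det A=\det A_1\det(PD)\neq 0$ the map $T$ is injective, so $T(\mathcal{A})=\mathcal{A}$.

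For the ``only if'' direction I would assume $T(\mathcal{A})=\mathcal{A}$, noting this forces $T|_\mathcal{A}$ to be a bijection. First, $A_3=0$ and this uses only $T(\mathcal{A})\subseteq\mathcal{A}$: fix $i>s$; the $i$-th coordinate $(T(g))_i=\sum_{j\le s}A_{ij}g'_j+\sum_{j>s}A_{ij}g''_j+b_i$ must be a nonzero element of $\mathbb{F}_q$ for every $g=(g',g'')\in\mathcal{A}$; if some $A_{ij_0}\neq 0$ with $j_0\le s$, then fixing the remaining coordinates arbitrarily (in their allowed ranges) and letting $g'_{j_0}$ run over all of $\mathbb{F}_q$ produces a point of $\mathcal{A}$ whose image has $i$-th coordinate $0$, a contradiction. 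Hence $A_{ij}=0$ for all $j\le s$, $i>s$.

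Consequently, for $i>s$ the coordinate $(T(g))_i$ depends only on $g''$, so $T$ induces an affine map $T''(x'')=A_4x''+b''$ on $\mathbb{F}_q^{m-s}$ with $(T(g',g''))''=T''(g'')$. From $T(\mathcal{A})\subseteq\mathcal{A}$ we get $T''\big((\mathbb{F}_q^\ast)^{m-s}\big)\subseteq(\mathbb{F}_q^\ast)^{m-s}$; and from the \emph{surjectivity} of $T|_\mathcal{A}$ we get the reverse inclusion, since for $w\in(\mathbb{F}_q^\ast)^{m-s}$ the point $(0,w)\in\mathcal{A}$ has a $T$-preimage $(g',g'')\in\mathcal{A}$, whence $T''(g'')=w$. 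Therefore $T''(\mathbb{T}^{m-s})=\mathbb{T}^{m-s}$, and the Corollary to Proposition~\ref{Prop:multisub} yields $b''=0$ and $A_4=PD$ with $P$ a permutation matrix and $D$ a nonsingular diagonal matrix. Finally, $A_1$ is nonsingular: if $A_1v'=0$ for some nonzero $v'\in\mathbb{F}_q^s$, then for any $(g',g'')\in\mathcal{A}$ the distinct points $(g',g'')$ and $(g'+v',g'')$ of $\mathcal{A}$ have the same image under $T$, contradicting injectivity. This gives the stated block form, with $A_2$ unconstrained.

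I do not anticipate a genuine obstacle here; the one point needing care is upgrading the inclusion $T''\big((\mathbb{F}_q^\ast)^{m-s}\big)\subseteq(\mathbb{F}_q^\ast)^{m-s}$ to the equality $T''(\mathbb{T}^{m-s})=\mathbb{T}^{m-s}$, so that the Corollary actually applies — this is exactly where the surjectivity half of ``$T|_\mathcal{A}$ is a bijection,'' together with the finiteness of $\mathcal{A}$, is needed.
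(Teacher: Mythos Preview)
Your argument is correct. The organization differs from the paper's proof, though the underlying ideas overlap. You first kill the lower-left block $A_3$ by the elementary observation that a coordinate $g'_{j_0}$ with $j_0\le s$ ranges over all of $\mathbb{F}_q$, forcing the $i$-th output coordinate to hit $0$; then you pass to the induced map $T''$ on the torus factor and invoke the Corollary to Proposition~\ref{Prop:multisub} to obtain $b''=0$ and $A_4=PD$. The paper instead applies Lemma~\ref{LA} directly to each row $i>s$ to conclude in one stroke that the row has weight one and $b_i=0$, and then argues the nonzero entry must sit in a column $j>s$; it does not formally reduce to the torus Corollary. Your route is a bit more modular (it reuses the torus case as a black box), at the cost of the extra surjectivity step you correctly flagged; the paper's route is slightly more self-contained but leans on Lemma~\ref{LA}. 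The injectivity argument for $A_1$ is essentially the same in both.
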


\begin{proof}
 Consider $i>s$. Lemma \ref{LA} implies that there is $g\in\mathbb{T}^m$ such that $(Ag+b)_i=0$, unless the weight of the $i$-th row of $A$ is $1$ and $b_i=0$. Thus, $(Ax+b)_i=a_jx_j$ for some $a_j\in\mathbb{F}_q^*$. Also, $j>s$, otherwise for any point $v\in\mathcal{A}$ with $v_j=0$ we have $(Av+b)_i=0$ and thus $Av+b\notin\mathcal{A}$. This proves that the bottom $m-s$ rows of $A$ form a block $(0\ \ PD)$ for some permutation matrix $P$ and some nonsingular diagonal matrix~$D$.

%We claim that if $$A=\begin{pmatrix}
%    A_1& A_2\\
%    0&PD\end{pmatrix},$$ $Ax+b$ is a bijection of $\mathcal{A}$ if and only if $A_1$ is nonsingular.

    If $A_1$ is singular, there is nonzero $v\in\mathbb{F}_q^s$ such that $A_1v=0$. Let $w=(v,{1,\ldots,1})\in\mathbb{F}_q^m$ and 
    $w'=(0,\ldots,0,1,\ldots,1)\in\mathbb{F}_q^m$, with $wt(w')=m-s.$ We have $w,w'\in\mathcal{A}$ and $Aw+b=Aw'+b$, thus $Ax+b$ is not an injection. The other direction is trivial.
\end{proof}

\begin{corollary}\label{24.01.24-2}
    Take $\mathcal{A}=\mathbb{F}_q^s\times(\mathbb{F}_q^\ast)^{m-s}$.
    If $L(\mathcal{A})$ is a decreasing code with the Borel property, then
    an affine transformation $T(x)=Ax+b$ lies in $\mathrm{Perm}_A(L(\mathcal{A}))$ if
    $$A=\begin{pmatrix}
    A_1&0\\
    0&I_{m-s}\end{pmatrix},$$
    where $A_1$ is a lower triangular matrix, $I_{m-s}$ is the identity of size $m-s$, and $b_i=0$ for $i>s$.
\end{corollary}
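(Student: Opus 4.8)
The plan is to verify directly that an affine transformation $T(x)=Ax+b$ of the stated block form satisfies the two conditions of Definition~\ref{D:affine-perm} for the code $L(\mathcal{A})$ with $\mathcal{A}=\mathbb{F}_q^s\times(\mathbb{F}_q^\ast)^{m-s}$. Condition (1), namely $T(\mathcal{A})=\mathcal{A}$, is immediate from Proposition~\ref{24.01.24}: the hypothesis gives $A$ in the form $\begin{pmatrix}A_1&A_2\\0&PD\end{pmatrix}$ with $A_1$ nonsingular, $b_i=0$ for $i>s$, and here we specialize to $A_2=0$, $P=I_{m-s}$, $D=I_{m-s}$, and $A_1$ lower triangular (hence nonsingular since it is invertible as part of the hypothesis — I should state that $A_1$ is assumed invertible, or note that a lower triangular matrix appearing as the defining matrix of an affine permutation must have nonzero diagonal). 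So the only real content is condition (2): for every $u\in L$, the polynomial $\overline{T(u)}$ lies in $\mathrm{Span}_{\mathbb{F}_q}(L)$.

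The key observation for condition (2) is that $T$ acts on the variables by $T(x_i)=\sum_{j\le i}(A_1)_{ij}x_j+b_i$ for $i\le s$ (a lower-triangular affine action, since $A_1$ is lower triangular), and $T(x_i)=x_i$ for $i>s$. Thus for a monomial $u=x_1^{v_1}\cdots x_m^{v_m}\in L$, we have $T(u)=\big(\prod_{i\le s}T(x_i)^{v_i}\big)\cdot x_{s+1}^{v_{s+1}}\cdots x_m^{v_m}$, and each $T(x_i)^{v_i}$ for $i\le s$ expands into a polynomial supported on monomials obtained from $x_i^{v_i}$ by Borel movements among $x_1,\dots,x_i$ (exactly as in the proof of Theorem~\ref{24.01.20}). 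Consequently every monomial $w$ in the support of $T(u)$ is obtained from $u$ by a sequence of Borel movements applied to divisors of $u$; since $L$ is decreasing and has the Borel property, $w\in L$. Hence $T(u)\in\mathrm{Span}_{\mathbb{F}_q}(L)$, and a fortiori $\overline{T(u)}\in\mathrm{Span}_{\mathbb{F}_q}(L)$ because passing to the reduction $\overline{\,\cdot\,}$ modulo $I_\mathcal{A}$ only replaces each monomial by a $\mathbb{F}_q$-linear combination of monomials in $\Delta$ and, for an element already in $\mathrm{Span}_{\mathbb{F}_q}(L)\subseteq\mathrm{Span}_{\mathbb{F}_q}(\Delta)$, acts as the identity. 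This last point deserves a sentence: since $L\subseteq\Delta$, every $w\in\mathrm{supp}(T(u))\cap L$ already satisfies $\deg_{x_i}w<n_i$, so no reduction is needed and $\overline{T(u)}=T(u)$.

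I expect the main (very mild) obstacle to be purely bookkeeping: making sure the block structure forces $T(x_i)=x_i$ for $i>s$ so that the degrees in the last $m-s$ variables are unchanged — this is what guarantees we never leave $\Delta$ in those coordinates — and confirming that the lower-triangularity of $A_1$ is what keeps all Borel movements "valid" (i.e. directed from $x_i$ to some $x_j$ with $j<i$). No genuinely new estimate is required; the argument is essentially Theorem~\ref{24.01.20} applied to the first $s$ variables, combined with the identity action on the remaining ones, together with Proposition~\ref{24.01.24} to certify $T(\mathcal{A})=\mathcal{A}$. One could remark that the converse inclusion fails in general (the full permutation group can be larger, cf. the example with $\mathbb{F}_9$), which is why the statement is only an implication.
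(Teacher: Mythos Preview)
Your proposal is correct and follows essentially the same approach as the paper: invoke Proposition~\ref{24.01.24} to verify $T(\mathcal{A})=\mathcal{A}$, and use the Borel property (the content of Theorem~\ref{24.01.20}) to verify that $T$ stabilizes $\mathrm{Span}_{\mathbb{F}_q}(L)$. The paper's version is simply terser---it observes that the matrices of the stated block form are precisely the lower triangular ones among those characterized in Proposition~\ref{24.01.24}, and then cites the Borel property directly rather than re-expanding the support computation you spell out.
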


\begin{proof}
    Since $L$ has the Borel property, then $L$ is stabilized by lower triangular affine transformations. The matrices in Proposition \ref{24.01.24} that are lower triangular are precisely those of the given shape.
\end{proof}

\begin{proposition}\label{24.01.24-1}
Take $\mathcal{A}=\mathbb{F}_q^{m_0}\times \prod_{i=1}^l G_i^{m_i}$, where the $G_i$'s are distinct subgroups of $\mathbb{F}_q^\ast$. Then, an affine transformation $T(x)=Ax+b$ fixes $\mathcal{A}$ if and only if $b_i=0$ for $i>m_0$ and
    $$A=\begin{pmatrix}
    A_{0}& A_{1} & \ldots&  A_{l}\\
    0& P_1D_{1} & \ldots&  0\\
%    0& 0 & P_2D_{2} & \ldots&  0\\    
   \ldots& \ldots & \ldots& \ldots\\ 
   0& 0 & \ldots& P_l D_l\\ 
    \end{pmatrix},$$
    \noindent where $P_i$ is an $m_i\times m_i$ permutation matrix, $D_i$ a nonsingular diagonal matrix with entries in $G_i$, and $A_0$ is an $m_0\times m_0$ nonsingular matrix.
\end{proposition}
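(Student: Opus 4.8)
The plan is to follow the strategy of Proposition~\ref{24.01.24} and Proposition~\ref{Prop:multisub}, extending it to several distinct subgroup blocks. The ``if'' direction is a direct verification: with $A$ of the stated block form and $b_i=0$ for $i>m_0$, the output coordinates in the $i$-th block equal $P_iD_i g^{(i)}$, where $g^{(i)}\in G_i^{m_i}$ and $D_i$ has entries in $G_i$; since $G_i$ is a group, $D_ig^{(i)}\in G_i^{m_i}$ and $P_i$ merely permutes, so this block is mapped bijectively onto $G_i^{m_i}$. Fixing the bottom coordinates, the top $m_0$ coordinates become $A_0g^{(0)}$ plus a constant, which runs over all of $\mathbb{F}_q^{m_0}$ as $g^{(0)}$ does because $A_0$ is nonsingular. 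Hence $T(\mathcal{A})=\mathcal{A}$.

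For the ``only if'' direction, assume $T(\mathcal{A})=\mathcal{A}$ and index the rows of $A$ by the blocks $0,1,\dots,l$, where block $i\ge 1$ has size $m_i$ and corresponds to an output coordinate lying in $G_i$. Fix a row $r$ in some block $i\ge 1$. Applying Lemma~\ref{LA} to the Cartesian product of nontrivial subgroups $(\mathbb{F}_q^*)^{m_0}\times\prod_{k} G_k^{m_k}$, which is contained in $\mathcal{A}$, shows that if row $r$ had weight $\ge 2$ there would be a point of $\mathcal{A}$ whose image has $r$-th coordinate outside $G_i$; so row $r$ has exactly one nonzero entry, say $a_{rc}$ in column $c$. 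Because $T|_{\mathcal{A}}$ is a bijection onto $\mathcal{A}$, the set $\{a_{rc}g_c+b_r : g\in\mathcal{A}\}$ is exactly the set of $r$-th coordinates occurring in $\mathcal{A}$, namely all of $G_i$. If $c\le m_0$, then $g_c$ ranges over all of $\mathbb{F}_q$ and $a_{rc}g_c+b_r$ takes $q>|G_i|$ values, which is impossible; hence $c$ lies in some block $k\ge 1$ and $a_{rc}G_k+b_r=G_i$. Lemma~\ref{L0} forces $b_r=0$, so $a_{rc}G_k=G_i$; since $G_i$ is a subgroup containing $1$, this yields $a_{rc}\in G_k$ and $G_k=G_i$, and then distinctness of the $G_i$'s gives $k=i$ and $a_{rc}\in G_i$. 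Thus every block-$i$ row is supported on block-$i$ columns, which is exactly the claimed shape of the bottom $m-m_0$ rows, and $b_i=0$ for all $i>m_0$.

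It remains to see that each block $P_iD_i$ is nonsingular (so $P_i$ is a genuine permutation matrix) and that $A_0$ is nonsingular. For the first, the bottom coordinates of $T(g)$ depend only on the bottom coordinates of $g$ and are given blockwise by $g^{(i)}\mapsto P_iD_ig^{(i)}$; since $T(\mathcal{A})=\mathcal{A}$, this map is onto $\prod_i G_i^{m_i}$, hence, by finiteness, a bijection, so each $P_iD_i\colon G_i^{m_i}\to G_i^{m_i}$ is a bijection. A matrix with one nonzero entry per row that is not a column permutation omits some column index, and then its image over $G_i^{m_i}$ has cardinality strictly less than $|G_i|^{m_i}$, contradicting bijectivity; so $P_i$ is a permutation matrix and $D_i$ is nonsingular with entries in $G_i$. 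For $A_0$: if $A_0v=0$ for some nonzero $v\in\mathbb{F}_q^{m_0}$, take $w=(v,e)$ and $w'=(0,e)$, where $e$ is the all-ones vector on the bottom coordinates (each $G_i\ni 1$), both in $\mathcal{A}$; then $Aw+b=Aw'+b$ while $w\ne w'$, contradicting injectivity of $T|_{\mathcal{A}}$. The only genuinely new point compared to Proposition~\ref{24.01.24} is the step separating the distinct subgroup blocks, and this is where I expect the reader to pause; it is handled entirely by the coset identity $a_{rc}G_k=G_i\Rightarrow G_k=G_i$ together with the hypothesis that the $G_i$ are pairwise distinct.
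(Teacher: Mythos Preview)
Your proof is correct and follows essentially the same approach as the paper: both use Lemma~\ref{LA} to force the rows below the top $m_0$ to have a single nonzero entry, then Lemma~\ref{L0} and the coset identity $aG_k=G_i\Rightarrow G_k=G_i$ (together with distinctness of the $G_i$) to place that entry in the correct block, and finally the same injectivity argument for $A_0$. The only cosmetic difference is that the paper packages the block-diagonal analysis of the bottom-right submatrix as a direct appeal to Proposition~\ref{Prop:multisub}, whereas you redo that row-by-row argument explicitly; your version is a bit more self-contained and also spells out why each $P_i$ is a genuine permutation matrix, which the paper leaves implicit.
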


\begin{proof} The ``if" direction is clear.
Consider $T(x)=Ax+b$. Let $A'$ be the submatrix of $A$ obtained by removing the first $m_0$ rows and the first $m_0$ columns. Let $b'$ be the vector obtained by removing the first $m_0$ entries of $b$. Then $A'x+b'$ is a bijective affine transformation of $G_1^{m_1}\times\cdots\times G_l^{m_l}$ and by Proposition \ref{Prop:multisub} we have that $b'=0$ and $A'$ has the desired form. Lemma \ref{LA} proves that for $i>m_0$ and $j\leq m_0$, $A_{ij}=0$. Thus, $A$ has the desired property.
%    $$A=\begin{pmatrix}
%    A_{0}& A_{1} & A_{2} & \ldots&  A_{l}\\
%    0& P_1D_{1} & 0 & \ldots&  0\\
%    0& 0 & P_2D_{2} & \ldots&  0\\    
%   \ldots& \ldots & \ldots & \ddots& \ldots\\ 
%   0& 0 & 0 & \ldots& P_l D_l\\ 
%    \end{pmatrix}.$$
Finally, since $A$ is nonsingular, $A_0$ is also nonsingular.
%$A$ is otherwise there is $v,w\in\mathbb{F}_q^{m_0}$ such that $T(v,1\ldots,1)=T(w,1,\ldots,1)$, which is a contradiction. The other implication is trivial.
\end{proof}

%We come to the main result of this section.
\begin{theorem}
    Let $\mathcal{A}=\mathbb{F}_q^{m_0}\times \prod_{i=1}^l G_i^{m_i}$, where the $G_i$'s are distinct subgroups of $\mathbb{F}_q^\ast$. If $L(\mathcal{A})$ is a decreasing code with the Borel property, then $T(x)=Ax+b$ lies in $\mathrm{Perm}_A(L(\mathcal{A}))$ if $b_i=0$ for all $i>m_0$ and
     $$A=\begin{pmatrix}
    A_{0}& 0 & \ldots&  0\\
    0& I_{m_1} & \ldots&  0\\
%    0& 0 & I_{m_2} & \ldots&  0\\
    \ldots& \ldots & \ddots& \ldots\\
   0& 0& \ldots &I_{m_l}\\ 
    \end{pmatrix}.$$
\end{theorem}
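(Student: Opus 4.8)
The plan is to combine the structural description of $\mathrm{Perm}_A(L(\mathcal{A}))$-feasible matrices from Proposition~\ref{24.01.24-1} with the fact that the Borel property guarantees stability of $L$ under lower triangular transformations. Concretely, fix $T(x)=Ax+b$ with $b_i=0$ for all $i>m_0$ and $A$ of the stated block-diagonal form. First I would verify condition (1) of Definition~\ref{D:affine-perm}, namely $T(\mathcal{A})=\mathcal{A}$: this is immediate from Proposition~\ref{24.01.24-1}, since the given $A$ is a special case of the matrices described there (taking $A_i=0$ for $i\geq 1$, $P_i=I_{m_i}$, $D_i=I_{m_i}$, whose diagonal entries $1$ indeed lie in every $G_i$), and the corresponding $b$ satisfies the stated vanishing condition. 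So $T$ fixes $\mathcal{A}$.

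Next I would verify condition (2), $\overline{T(L)}\subseteq \mathrm{Span}_{\mathbb{F}_q}(L)$. The key observation is that $A$, being block diagonal with an arbitrary nonsingular block $A_0$ in the top-left and identity blocks elsewhere, acts on the variables by $T(x_i)=\sum_{j=1}^{m_0}(A_0)_{ij}x_j$ for $i\leq m_0$ and $T(x_i)=x_i$ for $i>m_0$. Thus $T$ only mixes the first $m_0$ variables and leaves the remaining variables untouched. Here is where I would invoke the Borel property: writing any monomial $u\in L$ as $u=u_0\cdot u_1$ where $u_0$ involves only $x_1,\dots,x_{m_0}$ and $u_1$ involves only $x_{m_0+1},\dots,x_m$, we have $T(u)=T(u_0)\cdot u_1$. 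Every monomial $w_0$ in $\mathrm{supp}(T(u_0))$ is obtained from a divisor of $u_0$ by a sequence of Borel movements among the variables $x_1,\dots,x_{m_0}$ (this is precisely the computation in the proof of Theorem~\ref{24.01.20}, applied to the nonsingular — though not necessarily lower triangular — matrix $A_0$; since $A_0$ is \emph{arbitrary} nonsingular, not just lower triangular, I would need the full Borel property closing under $x_j/x_i \cdot u$ for \emph{all} $i\neq j$, not just $j<i$). Then $w_0\cdot u_1$ is obtained from the divisor $(u_0\text{'s divisor})\cdot u_1$ of $u$ by these same Borel movements, and since $L$ is decreasing and Borel-closed, $w_0\cdot u_1\in L$. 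Hence $T(u)\in\mathrm{Span}_{\mathbb{F}_q}(L)$, and since $L\subseteq\Delta$ already, $\overline{T(u)}=T(u)$ modulo the vanishing ideal lies in the span of $L$.

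One subtlety I would flag and handle carefully: the Borel property as defined in the paper only closes $L$ under movements $\frac{x_j}{x_i}u$ with $j<i$, i.e. it is directional, whereas an arbitrary nonsingular $A_0$ produces movements in both directions. So strictly the statement as written may require either (a) restricting $A_0$ to be lower triangular, matching Corollary~\ref{24.01.24-2}, or (b) interpreting ``Borel property'' here in the symmetric sense. I expect the intended reading, consistent with Corollary~\ref{24.01.24-2} and the statement of Corollary~\ref{24.01.24-2}'s hypothesis, is that $A_0$ should be lower triangular; under that reading the argument is clean: $T$ restricted to the first block is a lower triangular affine transformation (with the relevant entries of $b$ possibly nonzero, which is fine since Theorem~\ref{24.01.20} allows arbitrary $b$), $L$ is stabilized by lower triangular transformations by Theorem~\ref{24.01.20}, and the untouched trailing variables cause no trouble by the product decomposition above. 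The main obstacle, therefore, is not a deep computation but rather pinning down this directionality issue and making sure the block structure interacts correctly with the Borel movements; once that is settled, the proof is a short assembly of Proposition~\ref{24.01.24-1} (for condition (1)) and Theorem~\ref{24.01.20} together with the variable-splitting observation (for condition (2)).
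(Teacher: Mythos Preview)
Your approach is essentially the same as the paper's: invoke Proposition~\ref{24.01.24-1} for condition~(1) and the stabilization of $L$ by lower triangular affine transformations (Theorem~\ref{24.01.20}) for condition~(2). Your flagged subtlety about the directionality of Borel movements is well taken---the paper's one-line proof implicitly assumes $A_0$ is lower triangular (as in Corollary~\ref{24.01.24-2}), and under that reading your argument is exactly what is intended.
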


\begin{proof}
    The proof is similar to the one of Corollary \ref{24.01.24-2}. It follows from Proposition \ref{24.01.24-1} and the fact that lower triangular affine transformations stabilize $L$.
\end{proof}

\section{Additive groups}

We now consider additive subgroups of $\mathbb{F}_q$ to build Cartesian sets. While in the case of multiplicative subgroups, the affine permutation group is heavily reduced to a subgroup of permutation matrices, additive subgroups can still have a richer structure in their automorphism group. However, the choice of points still imposes several limitations on the matrices.

Recall that if $G$ is an additive subgroup of $\mathbb{F}_q$, then $G$ is a vector space over $\mathbb{F}_p$, where $p$ is the characteristic of $\mathbb{F}_q$. We now prove which possible affine transformations preserve $G$.

\begin{proposition}\label{Prop.add}
    Let $G$ be an additive subgroup of $\mathbb{F}_q$
    and $T(x)=ax+b$, $a,b\in\mathbb{F}_q$, a bijection of $G$. Then $b\in G$ and $a\in\mathbb{F}_{q'}$, where $\mathbb{F}_{q'}$ is the largest subfield of $\mathbb{F}_q$ such that $G$ is an $\mathbb{F}_{q'}$-vector subspace.
\end{proposition}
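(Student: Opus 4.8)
The plan is to first dispose of the constant $b$, then reduce the statement to the purely multiplicative claim that $aG=G$, and finally identify the set of all admissible multipliers $a$ with the field $\mathbb{F}_{q'}$.

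First I would note that since $G$ is an additive subgroup we have $0\in G$, hence $T(0)=b\in G$; this already yields $b\in G$. Because translation by $-b$ is itself a bijection of $G$ (using $b\in G$), the composition $x\mapsto ax=T(x)-b$ is a bijection of $G$. If $a=0$ this forces $G=\{0\}$, a degenerate case in which every subfield of $\mathbb{F}_q$ works and there is nothing to prove; so from now on assume $a\neq 0$ and $aG=G$.

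Next I would introduce the set $S=\{c\in\mathbb{F}_q : cG\subseteq G\}$. Since $x\mapsto cx$ is injective on $\mathbb{F}_q$ and $G$ is finite, the inclusion $cG\subseteq G$ with $c\neq 0$ already forces $cG=G$; in particular $a\in S$. The key step is to verify that $S$ is a subfield of $\mathbb{F}_q$: it contains $0$ and $1$; it is closed under addition because $(c+c')g=cg+c'g\in G$ for every $g\in G$, using that $G$ is closed under addition; it is closed under negation since $(-c)G=-(cG)=-G=G$, and under multiplication since $(cc')G=c(c'G)=cG=G$; and it is closed under inverses because $cG=G$ gives $G=c^{-1}G$. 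Moreover $G$ is an $S$-vector space under the scalar multiplication inherited from $\mathbb{F}_q$, since $S\cdot G\subseteq G$ by definition of $S$ and all vector-space axioms are inherited from $\mathbb{F}_q$.

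Finally I would establish maximality: if $\mathbb{F}$ is any subfield of $\mathbb{F}_q$ over which $G$ is a vector subspace, then $cG\subseteq G$ for all $c\in\mathbb{F}$, so $\mathbb{F}\subseteq S$. Hence $S$ is the unique largest subfield over which $G$ is a vector space, i.e.\ $S=\mathbb{F}_{q'}$, and therefore $a\in S=\mathbb{F}_{q'}$, as claimed. I do not anticipate a serious obstacle here; the only point deserving a little care is confirming that the phrase ``largest subfield over which $G$ is a vector space'' is well defined, which is precisely what the construction of $S$ supplies.
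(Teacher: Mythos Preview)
Your argument is correct and follows essentially the same route as the paper: obtain $b\in G$ from $T(0)$, reduce to the multiplicative statement $aG=G$, and then identify the set of admissible multipliers with a subfield of $\mathbb{F}_q$, which by maximality is $\mathbb{F}_{q'}$. The only cosmetic difference is that the paper works with $H=\{c:cG=G\}$ while you use $S=\{c:cG\subseteq G\}$; as you observe, finiteness makes these coincide for nonzero $c$, and your version has the advantage of containing $0$ so that the subring/subfield verification is cleaner. Your explicit check of maximality also fills in a step the paper leaves implicit.
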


\begin{proof}
    We have $T(0)=b\in G$ since $T$ is a bijection. Since the effect of $b$ is trivial, we can assume that $T(x)=ax$. It can be readily seen that the set $H=\{a\in\mathbb{F}_q : aG=G\}$ is a subring and, hence, a subfield of $\mathbb{F}_q$. Therefore, $H=\mathbb{F}_{q'}$ for some $q'|q$, as required.
\end{proof}

\begin{example}\rm
    Let $\alpha$ be a primitive element of $\mathbb{F}_{16}$ with $\alpha^4+\alpha+1=0$. Let $G=\alpha^6\mathbb{F}_2+\alpha^{11}\mathbb{F}_2$. Note that $G$ is a vector subspace of dimension $2$ over $\mathbb{F}_2$ and of dimension $1$ over $\mathbb{F}_4$. In fact, $G=\alpha\mathbb{F}_4$, and so $T(x)=ax+b$ is a bijection of $G$ if and only if $b\in G$ and $a\in\mathbb{F}_4$.
\end{example}

We can characterize the affine permutations preserving $G^m$ for an additive subgroup $G$ of $\mathbb{F}_q$. 

\begin{corollary}\label{25.01.24}
    Let $\mathcal{A}=G^m$, where $G$ is an additive subgroup of $\mathbb{F}_q$. Let $\mathbb{F}_{q'}$ be the largest subfield of $\mathbb{F}_q$ such that $G$ is an $\mathbb{F}_{q'}$-vector space. Then an affine transformation $T(x)=Ax+b$ fixes $\mathcal{A}$ if and only if $b\in\mathcal{A}$ and $A$ is a nonsingular matrix over $\mathbb{F}_{q'}$.
\end{corollary}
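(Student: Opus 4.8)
The plan is to reduce the $m$-dimensional statement to the one-dimensional Proposition \ref{Prop.add} by testing the affine transformation $T(x)=Ax+b$ on carefully chosen points of $\mathcal{A}=G^m$. The ``if'' direction is immediate: if $A$ is nonsingular over $\mathbb{F}_{q'}$ and $b\in\mathcal{A}=G^m$, then each coordinate of $Ax+b$ is an $\mathbb{F}_{q'}$-linear combination of the $x_j\in G$ plus an element of $G$, hence lies in $G$ since $G$ is an $\mathbb{F}_{q'}$-vector space; bijectivity on $G^m$ follows because $A$ is invertible over $\mathbb{F}_{q'}$ and $G^m$ is a finite $\mathbb{F}_{q'}$-vector space on which $A$ acts.

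For the ``only if'' direction, assume $T(\mathcal{A})=\mathcal{A}$. First I would show $b\in\mathcal{A}$ by evaluating at the origin: $T(0)=b\in G^m=\mathcal{A}$. Since translation by $b\in\mathcal{A}$ is itself a bijection of $\mathcal{A}$, we may replace $T$ by $x\mapsto Ax$ and assume $b=0$. Next, fix a row index $i$ and a column index $j$, and apply $T$ to the point $e_j g$ having $g\in G$ in position $j$ and zeros elsewhere; this point lies in $\mathcal{A}$, and its image has $i$-th coordinate $A_{ij}g$, which must lie in $G$. Thus $A_{ij}g\in G$ for every $g\in G$, i.e. $A_{ij}G\subseteq G$, and since $G$ is finite this forces $A_{ij}G=G$ (or $A_{ij}=0$). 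By Proposition \ref{Prop.add}, either $A_{ij}=0$ or $A_{ij}\in\mathbb{F}_{q'}$; in both cases $A_{ij}\in\mathbb{F}_{q'}$. Hence every entry of $A$ lies in $\mathbb{F}_{q'}$, so $A\in\mathbb{F}_{q'}^{m\times m}$. Finally, $A$ must be nonsingular over $\mathbb{F}_{q'}$: if not, there is a nonzero $v\in\mathbb{F}_{q'}^m$ with $Av=0$; scaling $v$ by a nonzero element of $G$ (using that $G$ is a nonzero $\mathbb{F}_{q'}$-vector space, so $G\setminus\{0\}$ is nonempty and $vg\in G^m$ for $g\in G$) produces a nonzero point $w\in\mathcal{A}$ with $Aw=0=A\cdot 0$, contradicting injectivity of $T$ on $\mathcal{A}$.

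The main obstacle is really just the bookkeeping in the entrywise argument: one must be careful that the conclusion ``$A_{ij}\in\mathbb{F}_{q'}$'' is exactly what Proposition \ref{Prop.add} delivers, including the degenerate case $A_{ij}=0$, and that the singular-matrix obstruction is genuinely realized by a point of $\mathcal{A}$ rather than merely of $\mathbb{F}_q^m$ — which works because $G\neq\{0\}$ guarantees a nonzero scalar in $G$ to clear denominators. No characteristic or size hypotheses beyond those already built into $\mathbb{F}_{q'}$ are needed, so the proof is short once the reduction to one variable per matrix entry is set up.
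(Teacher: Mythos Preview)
Your proposal is correct and follows essentially the same strategy as the paper's proof: evaluate at $0$ to get $b\in\mathcal{A}$, reduce to $b=0$, use Proposition~\ref{Prop.add} entrywise to force $A_{ij}\in\mathbb{F}_{q'}$, and rule out singular $A$ by scaling a null vector $v\in\mathbb{F}_{q'}^m$ by a nonzero element of $G$. Your version is in fact more explicit than the paper's, which simply asserts ``By Proposition~\ref{Prop.add}, $A$ should have entries in $\mathbb{F}_{q'}$'' without spelling out the coordinate-vector test $x=ge_j$ that you provide; otherwise the arguments coincide.
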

%\ivan{Why do we need $G$ to be proper?}

\begin{proof}
    Let $T(x)=Ax+b$ be an affine transformation that fixes $\mathcal{A}$. Since $0\in\mathcal{A}$, we have $b\in\mathcal{A}$ and so we can assume that $b=0$.

    By Proposition \ref{Prop.add}, $A$ should have entries in $\mathbb{F}_{q'}$. $A$ should be nonsingular, otherwise there exists $0\neq v\in\mathbb{F}_{q'}^m$ such that $Av=0$. If $0\neq\alpha\in G$, then $\alpha v\in\mathcal{A}^m$ and $T$ is not a bijection. Thus $A$ should be nonsingular over $\mathbb{F}_{q'}$.
\end{proof}

With this, we can characterize some of the affine permutations of Cartesian codes evaluated over $G^m$ for some additive subgroup $G$.

\begin{theorem}\label{T:same-G}
    Let $\mathcal{A}=G^m$, where $G$ is an additive subgroup of $\mathbb{F}_q$. Let $\mathbb{F}_{q'}$ be the largest subfield of $\mathbb{F}_q$ such that $G$ is an $\mathbb{F}_{q'}$-vector space. If $L(\mathcal{A})$ is a decreasing code with the Borel property, then $T(x)=Ax+b$ lies in $\mathrm{Perm}_A(L(\mathcal{A}))$ if $b\in G$ and $A$ is a nonsimgular lower triangular matrix over $\mathbb{F}_{q'}$.
\end{theorem}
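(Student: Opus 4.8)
The plan is to check the two defining conditions of Definition~\ref{D:affine-perm} for a transformation $T(x)=Ax+b$ with $b\in\mathcal{A}=G^m$ and $A$ a nonsingular lower triangular matrix over $\mathbb{F}_{q'}$.

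\emph{Condition (1): $T(\mathcal{A})=\mathcal{A}$.} This is immediate from Corollary~\ref{25.01.24}: a nonsingular lower triangular matrix with entries in $\mathbb{F}_{q'}$ is in particular a nonsingular matrix over $\mathbb{F}_{q'}$, and $b\in G^m=\mathcal{A}$, so $T$ fixes $\mathcal{A}$. Thus the hypothesis ``entries in $\mathbb{F}_{q'}$'' is used here and only here.

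\emph{Condition (2): $\overline{T(L)}\subseteq\mathrm{Span}_{\mathbb{F}_q}(L)$.} I would repeat the argument of Theorem~\ref{24.01.20} almost verbatim, since it never uses that the base set is $\mathbb{F}_q^m$. As $A$ is lower triangular, $T(x_i)=\sum_{j=1}^{i}A_{ij}x_j+b_i$. For a monomial $u=x_1^{\nu_1}\cdots x_m^{\nu_m}\in L$ we have $T(u)=\prod_{i=1}^{m}T(x_i)^{\nu_i}$, and expanding each factor shows that any $w\in\mathrm{supp}(T(u))$ is a product $w=w_1\cdots w_m$ with $w_i=x_1^{a_{i1}}\cdots x_i^{a_{ii}}$ and $k_i:=a_{i1}+\cdots+a_{ii}\le\nu_i$; such a $w_i$ is obtained from the divisor $x_i^{k_i}$ of $x_i^{\nu_i}$ by repeatedly replacing a copy of $x_i$ with $x_j$ for $j<i$, i.e. by a sequence of Borel movements. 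Hence $w$ is obtained from the divisor $x_1^{k_1}\cdots x_m^{k_m}$ of $u$ through a sequence of Borel movements; since $L$ is decreasing, $x_1^{k_1}\cdots x_m^{k_m}\in L$, and since $L$ has the Borel property, $w\in L$. Therefore $\mathrm{supp}(T(u))\subseteq L\subseteq\Delta$, which yields $\overline{T(u)}=T(u)\in\mathrm{Span}_{\mathbb{F}_q}(L)$, as needed.

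I do not anticipate a real obstacle: the theorem is essentially the synthesis of Corollary~\ref{25.01.24} (for condition (1)) and the Borel-movement computation of Theorem~\ref{24.01.20} (for condition (2)), and the restriction of the entries of $A$ to the subfield $\mathbb{F}_{q'}\subseteq\mathbb{F}_q$ affects neither which monomials appear in $\mathrm{supp}(T(u))$ nor the combinatorics of Borel movements (in positive characteristic the multinomial expansion may only drop terms, which can only shrink the support). The single point worth stating carefully is that no monomial of $\mathrm{supp}(T(u))$ escapes $\Delta$, so that $\overline{T(u)}=T(u)$ and no reduction modulo $I_{\mathcal{A}}$ is required: this holds because every iterated Borel movement of a divisor of $u\in L\subseteq\Delta$ again lies in $L$ by the Borel property, hence in $\Delta$.
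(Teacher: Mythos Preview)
Your proposal is correct and follows essentially the same approach as the paper: invoke Corollary~\ref{25.01.24} for condition~(1) and the Borel-movement argument of Theorem~\ref{24.01.20} for condition~(2). The paper's proof is terser, simply stating that lower triangular affine transformations stabilize $L$ (implicitly citing Theorem~\ref{24.01.20}) and then applying Corollary~\ref{25.01.24}; your version spells out the support computation and the point about $\overline{T(u)}=T(u)$, which is a welcome clarification but not a different route.
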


\begin{proof}
    Since $L$ has the Borel property, it is stabilized by lower triangular affine transformations. By Corollary \ref{25.01.24}, we have the conclusion.
\end{proof}

In the case when $\mathcal{A}=\prod_{i=1}^m G_i$ where the $G_i$'s are not necessarily different additive subgroups of $\mathbb{F}_q$,
the answer is not as elegant as in Theorem \ref{T:same-G} above.

Let $T(x)=Ax+b$ be an affine transformation that fixes $\mathcal{A}$. As before, since $0\in\mathcal{A}$, we have $b\in\mathcal{A}$, and so we can assume that $b=0$. Let $v\in\mathcal{A}$ be an element with just one nonzero entry in position $i$. Then $T(v)=v_i A_i$, where $A_i$ is the $i$-th column of $A$. 
Since $v_iA_i\in\mathcal{A}=\prod_{i=1}^m G_i$, then
\begin{equation}\label{24.01.29}
A_{ij}\in H_{ij}:=\{a\in\mathbb{F}_q\ |\ aG_i\subseteq G_j\}.   
\end{equation}

For the case where $G_i=G_j$, we now that $H_{ij}$ is the biggest subfield $\mathbb{F}_{q'}$ of $\mathbb{F}_q$ such that $G_i$ is an $\mathbb{F}_{q'}$ vector-space. If $|G_i|>|G_j|$, then $H_{ij}=\{0\}$. If $|G_i|\leq |G_j|$, $H_{ij}$ is an additive subgroup, but it is not necessarily closed under products, and thus, it is no longer a field. Even in the case where $G_i\subsetneq G_j$ and $G_j$ is a $\mathbb{F}_{q'}$-vector space, $H_{ij}$ can be bigger than $\mathbb{F}_{q'}$.

\begin{example}\label{24.01.24-ex1}\rm
    In $\mathbb{F}_{16}$, let $\alpha$ be a primitive element with $\alpha^4+\alpha+1=0$. Let $G_1=\mathbb{F}_2+\alpha\mathbb{F}_2+\alpha^2\mathbb{F}_2$ and $G_2=\alpha\mathbb{F}_4$ and $G_3=\mathbb{F}_2$. Then
    $$\begin{array}{lll}
    H_{11}=\mathbb{F}_2& H_{12}=\alpha^{-1}\mathbb{F}_4& H_{13}=G_1\\
    H_{21}=\{0\}& H_{22}=\mathbb{F}_4& H_{23}=G_2\\
    H_{31}=\{0\}& H_{32}=\{0\}& H_{33}=G_3
    \end{array},$$
where $H_{ij}$ is defined in~(\ref{24.01.29}).
Despite the matrices $A\in\mathbb{F}_q^{m\times m}$ such that $A_{ij}\in H_{ij}$ are not necessarily trivial, an incompatible structure of a monomial set $L$ may impose extra conditions that reduces the affine permutation to a trivial one.
\end{example}

\begin{example}\rm
    Let $G_1,G_2,G_3\subset\mathbb{F}_{16}$ as in Example \ref{24.01.24-ex1}. Let $L$ be the set of divisors of the monomials in $\{x_1^2,x_1x_2\}$.

   Observe that $L$ has the Borel property. A matrix $A\in\mathbb{F}_q^{3\times 3}$, with $A_{ij}\in H_{ij}$ defined in~(\ref{24.01.29}), is an upper triangular matrix for any $1\leq i,j\leq 3$. Thus,
   $A=\begin{pmatrix} a&b&c\\ 0&d&e\\0&0&f\end{pmatrix}.$
   If $T=Ax$,
   $$T(x_1^3)=a^2x_1^2+abx_1x_2+acx_1x_3+bcx_2x_3+b^2x_2^2+c^2x_3^2.$$
  The last four terms are not in $L$, so $b=c=0$. Analogously, $e=0$. Thus, the only affine permutations of $L(\mathcal{A})$, where $\mathcal{A}=G_1\times G_2\times G_3$, are $T=x+b$ for any $b\in\mathcal{A}$. 
\end{example}

\section{Conclusion}
An evaluation code depends on the evaluation of certain monomials at some points. An affine transformation $T(x)=Ax+b$ defines a permutation of an evaluation code if the sets of monomials and points are invariant under the action of $T$.
This paper studies the affine permutations of monomial Cartesian codes when the Cartesian set has copies of multiplicative or additive subgroups. This family of codes includes, in particular cases, the Reed-Muller and the Reed-Solomon codes. When the set of monomials is decreasing (closed under divisibility) or has the Borel property (closed under Borel movements), we provide the conditions for $A$ and $b$ to determine if $T$ defines a permutation. Our findings give insight into studying the automorphism of polar codes that are associated with Reed-Solomon kernels and can be seen as decreasing Cartesian codes.
\bibliographystyle{abbrv}
\bibliography{ref}

\end{document}